\newtheorem{theorem}{Theorem}[section]
\newtheorem{definition}[theorem]{Definition}
\newtheorem{proposition}[theorem]{Proposition}
\newtheorem{lemma}[theorem]{Lemma}
\newtheorem{conjecture}[theorem]{Conjecture}
\newcommand{\R}{\mathbb{R}}
\newcommand{\F}{\mathbb{F}}
\begin{document}

\title{A conditional construction of restricted isometries}

\author[Bandeira]{Afonso S.~Bandeira}
\address[Bandeira]{Program in Applied and Computational Mathematics, Princeton University, Princeton, NJ, USA ({\tt ajsb@math.princeton.edu}).}

\author[Mixon]{Dustin G.~Mixon}
\address[Mixon]{Department of Mathematics and Statistics, Air Force Institute of Technology, Dayton, OH, USA ({\tt dustin.mixon@afit.edu}).}

\author[Moreira]{Joel Moreira}
\address[Moreira]{Department of Mathematics, Ohio State University, Columbus, OH, USA ({\tt  moreira@math.osu.edu}).}
\keywords{Paley graph, restricted isometry property, Legendre symbol}

\thanks{ASB was supported by AFOSR Grant No.\ FA9550-12-1-0317. DGM was supported by NSF Grant No.\ DMS-1321779. The views expressed in this article are those of the authors and do not reflect the official policy or position of the United States Air Force, Department of Defense, or the U.S.\ Government.}

\begin{abstract}
We study the restricted isometry property of a matrix that is built from the discrete Fourier transform matrix by collecting rows indexed by quadratic residues.
We find an $\epsilon>0$ such that, conditioned on a folklore conjecture in number theory, this matrix satisfies the restricted isometry property with sparsity parameter $K=\Omega(M^{1/2+\epsilon})$, where $M$ is the number of rows.
\end{abstract}

\maketitle

\section{Introduction}
Let $K\leq M\leq N$ be positive integers and let $0\leq\delta<1$.
An $M\times N$ matrix $\Phi$ is said to satisfy the \textit{$(K,\delta)$-restricted isometry property (RIP)} if
\[
(1-\delta)\|x\|^2\leq\|\Phi x\|^2\leq(1+\delta)\|x\|^2
\]
whenever $x\in\R^N$ has at most $K$ nonzero entries (i.e., $x$ is a $K$-sparse vector); here, $\|\cdot\|$ denotes the $\ell_2$ norm.
RIP matrices are important in signal processing, making it possible to measure and recover a sparse signal using significantly fewer measurements than the dimension of the signal~\cite{Candes2008}.
Random matrices have been shown to satisfy the RIP with high probability for several distributions~\cite{Baraniuk_Davenport_DeVore_Wakin08,KrahmerMR:14,NelsonPW:14,Rauhut:10,Rudelson_Vershynin08}.
However, matrices constructed randomly have a nonzero (albeit small) probability of failing to be RIP, and checking whether a given matrix satisfies this property is an NP-hard problem~\cite{Bandeira_Dobriban_Mixon_Sawin13}. This has raised the interest in constructing explicit RIP matrices~\cite{Tao07b}.

While random constructions provide sparsity levels $K$ as high as $O_\delta(M/\operatorname{polylog}N)$, which is essentially optimal \cite{Baraniuk_Davenport_DeVore_Wakin08}, most deterministic constructions only achieve $K=O(\sqrt{M})$.
The only construction so far to break this square-root bottleneck is due to Bourgain, Dilworth, Ford, Konyagin and Kutzarova~\cite{Bourgain_Dilworth_Ford_Konyagin_Kutzarova11}; they construct a matrix satisfying RIP with $K=\Omega(M^{1/2+\epsilon})$ for some $\epsilon>0$.
Their analysis has since been optimized to show that $\epsilon$ can be taken to be $4.4466\times10^{-24}$~\cite{Mixon:14}.
Some effort has also been made in derandomizing the construction of RIP matrices~\cite{Bandeira_Fickus_Mixon_Moreira15}, i.e., finding random constructions of RIP matrices using as few random bits as possible.

The \textit{Paley matrix} is a deterministic matrix constructed using the quadratic residues modulo a prime $p$; we postpone the precise definition to the next section.
In \cite{Bandeira_Fickus_Mixon_Wong13}, it was conjectured that the Paley matrix satisfies the $(K,\delta)$-RIP for some $K=\Omega_\delta(p/\operatorname{polylog} p)$.
In this note, we leverage a folklore conjecture in number theory, Conjecture~\ref{conj_chung} below, which attempts to quantify the pseudorandomness of the Legendre symbol (and hence of the Paley matrix) to prove that the Paley matrix is $(K,\delta)$-RIP with $K=\Omega(M^{1/2+\epsilon})$ for some $\epsilon>0$.
This provides another deterministic construction which breaks the square-root bottleneck, although conditionally on a conjecture.

\section{The Paley matrix}

Throughout this paper we let $p\equiv1\bmod 4$ be a prime, let $\F_p$ denote the field with $p$ elements and let $\chi:\F_p\to\{-1,0,1\}$ denote the Legendre symbol, defined by
\[
\chi(x)=\left\{\begin{array}{rl}
1&\text{if } x=y^2\text{ for some }y\in\F_p\setminus\{0\}\\
0&\text{if }x=0\\
-1&\text{otherwise.}
\end{array}\right.
\]
The \textit{Paley graph} is the graph with vertex set $\F_p$ and with an edge between two vertices $x$ and $y$ if and only if $\chi(x-y)=1$.
We now define the \textit{Paley matrix}, denoted by $\Phi$.
We use the notation $e(a):=e^{2\pi ia/p}$.

First let $Q=\{x\in\F_p:\chi(x)\geq0\}$ be the set of squares in $\F_p$, and take $M=|Q|=(p+1)/2$.
Construct the $M\times p$ matrix $H$ with entries $H[i,j]=e(-i^2j)$.
In other words, $H$ contains the rows of the discrete Fourier transform matrix indexed by $Q$.
Next let $\tilde\Phi$ be the $M\times p$ matrix obtained from $H$ by normalizing its entries so that the entries in the first row of $\tilde\Phi$ have absolute value $\sqrt{1/p}$ and the other entries of $\tilde\Phi$ have absolute value $\sqrt{2/p}$. Finally, let the Paley matrix $\Phi$ be the $M\times 2M$ matrix obtained from the concatenation of $\tilde\Phi$ with the first column of the $M\times M$ identity matrix.
For instance, the Paley matrix for $p=5$ is
\[
\Phi=\left[\begin{array}{llllll}
\sqrt{\frac15}&\sqrt{\frac15}&\sqrt{\frac15}&\sqrt{\frac15}&\sqrt{\frac15}&1\\
\sqrt{\frac25}&\sqrt{\frac25}e^{-2\pi i/5}&\sqrt{\frac25}e^{-2\pi i2/5}&\sqrt{\frac25}e^{-2\pi i3/5}&\sqrt{\frac25}e^{-2\pi i4/5}&0\\
\sqrt{\frac25}&\sqrt{\frac25}e^{-2\pi i4/5}&\sqrt{\frac25}e^{-2\pi i3/5}&\sqrt{\frac25}e^{-2\pi i2/5}&\sqrt{\frac25}e^{-2\pi i/5}&0
\end{array}\right].
\]
Interestingly, there exists a $3\times 3$ unitary matrix $U$ such that $U\Phi$ is real, and the lines spanned by the column vectors of $U\Phi$ intersect the real unit sphere at the vertices of an icosahedron; in this way, the Paley matrix generalizes the icosahedron.

One well known construction of a random matrix satisfying the RIP (with high probability) is gotten by extracting a random subset of the rows of the discrete Fourier transform matrix \cite{Rudelson_Vershynin08}.
Thus the claim that $\Phi$ satisfies the RIP can be viewed as asserting that the set $Q$ behaves randomly in this sense.
This is reasonable, as the Legendre symbol is known to behave pseudorandomly.
For instance, in \cite{Bandeira_Fickus_Mixon_Moreira15}, this pseudorandomness was used to produce RIP matrices with entries being consecutive values of $\chi$, using fewer random bits than the usual random constructions.

While currently available results concerning the random-like behavior of $\chi$ seem to be insufficient to obtain a deterministic RIP matrix, we will make use of a well known number theoretic conjecture to this end.
The following definition will be convenient:

\begin{definition}
For each of the following, we implicitly take $p\equiv1\bmod4$ to be prime:
\begin{itemize}
\item[(a)]
Let $\operatorname{PaleyDiscrepancy}[\alpha,\beta]$ denote the statement that for every sufficiently large $p$,
\begin{equation}\label{eq_conj_chung}
\bigg|\sum_{a,b\in S}\chi(a-b)\bigg|<|S|^{2-\beta}
\qquad
\forall S\subseteq\F_p\mbox{ such that }|S|>p^\alpha.
\end{equation}
\item[(b)]
Let $\operatorname{PaleyRIP}[\gamma,\eta]$ denote the statement that for every sufficiently large $p$, the $(p+1)/2\times(p+1)$ Paley matrix satisfies the $(p^\gamma,p^\eta)$-restricted isometry property.
\item[(c)]
Let $\operatorname{PaleyClique}[\tau]$ denote the statement that for every sufficiently large $p$, the largest clique in the Paley graph of $p$ vertices has $\leq p^\tau$ vertices.
\end{itemize}
\end{definition}

The following (folklore) conjecture appeared as Conjecture 2.2 in \cite{Chung94}, where a proof for $\alpha>1/2$ is given; it has also been used in \cite{Zuckerman90} to produce a pseudorandom number generator.

\begin{conjecture}\label{conj_chung}
For each $\alpha>0$, there exists $\beta=\beta(\alpha)>0$ such that $\operatorname{PaleyDiscrepancy}[\alpha,\beta]$.
\end{conjecture}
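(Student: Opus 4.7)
The conjecture quantifies the pseudorandomness of the Legendre symbol, and my plan is to start from the classical Weil-type bounds, see exactly how far they go, and then isolate where the argument breaks and what new input would be needed to push further.

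The accessible regime is $\alpha>1/2$. Here the plan is to view the Paley graph as a Cayley graph on $\F_p$ with connection set $\{x:\chi(x)=1\}$; Weil's bound for the Fourier coefficients of $\chi$ translates into a spectral gap, with the non-trivial eigenvalues of the Paley graph having absolute value $\tfrac{1}{2}(\sqrt{p}-1)$. Applying the expander mixing lemma to $S$ and then converting the edge count back into a character sum (using $\chi(0)=0$ and that $-1$ is a square mod $p$ when $p\equiv1\bmod 4$) yields
\[
\bigg|\sum_{a,b\in S}\chi(a-b)\bigg|\;\leq\;C\sqrt{p}\,|S|\;+\;\frac{|S|^2}{p}\;+\;|S|.
\]
If $|S|>p^\alpha$ with $\alpha>1/2$, any choice $\beta<(2\alpha-1)/(2\alpha)$ forces $C\sqrt{p}\,|S|\leq|S|^{2-\beta}$, and the two lower-order terms are dwarfed by the same quantity. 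This proves $\operatorname{PaleyDiscrepancy}[\alpha,\beta]$ for every $\alpha>1/2$, recovering the partial result attributed to Chung.

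The main obstacle is the range $\alpha\leq 1/2$, where the bound $\sqrt{p}\,|S|$ becomes completely trivial (at $|S|\asymp\sqrt{p}$ it already exceeds $|S|^2$), so no cancellation is extracted from the cardinality of $S$ alone. To go below $\sqrt{p}$ one must exploit additive structure of $S$, and the natural attack is a two-stage strategy: first use additive combinatorics (Balog--Szemer\'edi--Gowers or a Fourier/density-increment argument) to decompose $S$ into a structured core plus a pseudorandom remainder, and then apply a Burgess-type bilinear character sum estimate on the structured piece. The step I expect to be fatal is the bilinear estimate: any unconditional improvement over the Weil barrier for Legendre character sums over arbitrary sparse subsets of $\F_p$ would, in particular, improve the $O(\sqrt{p})$ bound on the clique number of the Paley graph (in the notation above, it would yield $\operatorname{PaleyClique}[\tau]$ for some $\tau<1/2$), a problem that has stood open for decades. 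For that reason I would present the proposal as a two-tier result: an unconditional proof for $\alpha>1/2$ matching Chung's, together with a reduction showing that the $\alpha\leq 1/2$ regime is essentially equivalent to breaking the $\sqrt{p}$ barrier for the Paley clique number, which is precisely why the full statement is posed as a folklore conjecture rather than a theorem.
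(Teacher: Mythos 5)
You have been asked to prove a statement that the paper itself does not prove and does not claim to prove: Conjecture~\ref{conj_chung} is the \emph{hypothesis} of the main theorem, attributed to folklore (Conjecture~2.2 of Chung's paper), and the whole point of the article is to derive RIP consequences \emph{conditionally} on it. So there is no proof in the paper to compare against, and your decision to treat the statement as unprovable by current methods is the correct call.

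Your partial argument for $\alpha>1/2$ is essentially right and recovers the known result. The Paley graph is strongly regular with nontrivial eigenvalues $(-1\pm\sqrt{p})/2$ (so the relevant bound is $(\sqrt{p}+1)/2$ rather than $(\sqrt{p}-1)/2$, which changes nothing asymptotically), the expander mixing lemma gives $e(S,S)=\tfrac{p-1}{2p}|S|^2+O(\sqrt{p}\,|S|)$, and converting back to the character sum via $\sum_{a,b\in S}\chi(a-b)=2e(S,S)-|S|^2+|S|$ yields exactly your three-term bound; the exponent condition $\beta<(2\alpha-1)/(2\alpha)$ then follows. Your diagnosis of why $\alpha\le 1/2$ is out of reach is also accurate. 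The one claim I would push back on is that the $\alpha\le 1/2$ regime is ``essentially equivalent'' to breaking the $\sqrt{p}$ clique-number barrier. Only one implication is available: $\operatorname{PaleyDiscrepancy}[\alpha,\beta]$ implies $\operatorname{PaleyClique}[\alpha]$ (a clique $S$ with $|S|>p^\alpha$ has $\sum_{a,b\in S}\chi(a-b)=|S|^2-|S|$, violating \eqref{eq_conj_chung}), and indeed the paper's Theorem~\ref{thm.main result} interpolates $\operatorname{PaleyRIP}$ strictly between the two. The converse reduction --- from a clique bound back to a discrepancy bound over \emph{all} sparse sets --- is not known and you give no argument for it, so that sentence should be weakened to ``at least as hard as'' rather than ``equivalent to.''
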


This particular formulation states that for any subcollection of vertices $S$ in the Paley graph with $|S|>p^\alpha$, the number $e(S)$ of induced edges satisfies
\[
\bigg|e(S)-\frac{1}{2}\binom{|S|}{2}\bigg|<\frac{1}{4}|S|^{2-\beta}.
\]
In particular, this implies $\operatorname{PaleyClique}[\alpha]$.
It is currently known that $\operatorname{PaleyClique}[1/2]$, although this was slightly improved in~\cite{BachocRM:13} for infinitely many $p$.
It is widely conjectured that $\operatorname{PaleyClique}[\epsilon]$ for every $\epsilon>0$.

Our main theorem puts $\operatorname{PaleyRIP}[\gamma,\eta]$ in between these two classical conjectures.
\begin{theorem}[Main Result]
\label{thm.main result}
Each of the following statements implies the next:
\begin{itemize}
\item[(a)]\label{thm.main result.a}
There exist $0<\alpha<1/2$ and $0<\beta<2$ such that $\operatorname{PaleyDiscrepancy}[\alpha,\beta]$.
\item[(b)]\label{thm.main result.b}
There exist $\gamma>1/2$ and $\tau<1/2$ such that $\operatorname{PaleyRIP}[\gamma,\tau-1/2+\epsilon]$
for every $\epsilon>0$.
\item[(c)]\label{thm.main result.c}
There exists $\tau<1/2$ such that $\operatorname{PaleyClique}[\tau+\epsilon]$ for every $\epsilon>0$.
\end{itemize}
\end{theorem}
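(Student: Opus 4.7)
The plan is to prove (a) $\Rightarrow$ (b) via a flat-RIP-to-RIP reduction fed by the discrepancy hypothesis, and to derive (b) $\Rightarrow$ (c) by viewing Paley cliques as explicit RIP obstructions.

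First I would compute the Gram matrix of $\Phi$. Indexing the rows of $H$ by $i\in\{0,1,\ldots,(p-1)/2\}$ so that $\{i^2\}$ enumerates $Q$ exactly once, a classical Gauss sum gives $\sum_{i=1}^{(p-1)/2}e(i^2t)=\tfrac12(\chi(t)\sqrt{p}-1)$ for $t\neq 0$ (using $p\equiv 1\bmod 4$). The normalization $c_0=\sqrt{1/p}$, $c_i=\sqrt{2/p}$ is tuned precisely so that the off-diagonal entries of $\tilde\Phi^*\tilde\Phi$ are $\chi(j_1-j_2)/\sqrt{p}$, and the appended identity column has inner product $1/\sqrt{p}$ with every other column. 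Consequently, for disjoint $S,T\subseteq\F_p$ with $|S|,|T|\leq K$,
\[
\langle\Phi 1_S,\Phi 1_T\rangle=\frac{1}{\sqrt{p}}\sum_{j\in S,\,k\in T}\chi(j-k).
\]

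Using $\chi(-x)=\chi(x)$ together with the identity $\sum_{a,b\in S\cup T}\chi(a-b)=\sum_{S\times S}\chi+\sum_{T\times T}\chi+2\sum_{S\times T}\chi$, three applications of $\operatorname{PaleyDiscrepancy}[\alpha,\beta]$ (valid as soon as $|S\cup T|>p^\alpha$, arranged by taking $\gamma>\alpha$) yield $\big|\sum_{j\in S,\,k\in T}\chi(j-k)\big|\leq 3K^{2-\beta}$, so the flat-RIP constant is $\theta=O(K^{2-\beta}/\sqrt{p})$. I would then pass from flat RIP to RIP via a dyadic decomposition of unit test vectors into level sets of magnitude $\sim 2^{-\ell}$, in the spirit of \cite{Bourgain_Dilworth_Ford_Konyagin_Kutzarova11}; handling the cross-dyadic terms by combining the flat bound at large scales with the pointwise coherence bound $\mu=1/\sqrt{p}$ at small scales gives $\delta_K\leq C(\log p)^c\,K^{1-\beta}/\sqrt{p}=Cp^{\gamma(1-\beta)-1/2+o(1)}$. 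Taking $\gamma=1/2+\eta$ for $\eta>0$ sufficiently small (depending on $\beta$) makes $\gamma>1/2$ and $\tau:=\gamma(1-\beta)<1/2$; then $\delta\leq p^{\tau-1/2+\epsilon}$ for every $\epsilon>0$ and every large enough $p$, which is (b).

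For (b) $\Rightarrow$ (c), let $C$ be a clique in the Paley graph. Then $\chi(j-k)=1$ for all distinct $j,k\in C$, so $\Phi_C^*\Phi_C=(1-1/\sqrt{p})I+(1/\sqrt{p})J$ has top eigenvalue $1+(|C|-1)/\sqrt{p}$. If $|C|\leq K$, RIP gives $(|C|-1)/\sqrt{p}\leq\delta=p^{\tau-1/2+\epsilon}$, so $|C|\leq p^{\tau+\epsilon}+1$. If $|C|>K=p^\gamma$, the same inequality applied to a size-$K$ sub-clique gives $p^\gamma-1\leq p^{\tau+\epsilon}$, contradicting $\gamma>1/2>\tau$ for small $\epsilon$ and large $p$. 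Hence $|C|\leq p^{\tau+2\epsilon}$ for large $p$, proving $\operatorname{PaleyClique}[\tau+\epsilon]$ for every $\epsilon>0$. The main obstacle throughout is the flat-to-RIP reduction: a naive dyadic argument controls the diagonal pieces with the sharp $K^{1-\beta}/\sqrt{p}$ scaling but only gives $K^{(3-2\beta)/2}/\sqrt{p}$ on cross terms, which would force $\beta>1/2$; recovering the uniform $K^{1-\beta}/\sqrt{p}$ bound for all $\beta>0$ requires the more delicate level-by-level optimization of \cite{Bourgain_Dilworth_Ford_Konyagin_Kutzarova11}, adapted to the Paley matrix.
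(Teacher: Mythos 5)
Your proof of (b)$\Rightarrow$(c) is essentially correct and is a minor variant of the paper's: you work with the clique directly and bound the top eigenvalue of $(1-1/\sqrt{p})I+(1/\sqrt{p})J$, while the paper multiplies by a non-residue to get an independent set and bounds the smallest eigenvalue; your passage to a sub-clique when $|C|>K$ is a fine (and self-contained) substitute for the paper's invocation of the known bound $\omega\leq\sqrt{p}$. The Gram matrix computation and the three-fold application of the discrepancy hypothesis via $\sum_{a,b\in S\cup T}\chi(a-b)=\sum_{S\times S}\chi+\sum_{T\times T}\chi+2\sum_{S\times T}\chi$ also match the paper.

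The gap is in the flat-RIP-to-RIP step of (a)$\Rightarrow$(b), and you have in effect flagged it yourself. Two problems. First, your application of $\operatorname{PaleyDiscrepancy}[\alpha,\beta]$ to $T$ is not justified when $|T|\leq p^\alpha$ (the hypothesis says nothing about small sets, and flat RIP quantifies over \emph{all} disjoint pairs with $|J|\leq|I|\leq K$, including tiny ones); this case needs the trivial bound $|T|^2\leq p^{2\alpha}$, which must then be absorbed, and that is what forces the constraint $\tau\geq 2\alpha/(2-\beta)$ in the paper. Second, and more seriously, the raw bound $3K^{2-\beta}$ does not have the $\sqrt{|I||J|}$ scaling that the flat RIP definition requires, and your proposed repair --- a bespoke dyadic level-set argument with ``level-by-level optimization'' --- is exactly the step you admit fails naively for $\beta\leq 1/2$, which is the only interesting regime (the paper itself observes $\beta(\alpha)\leq 1$ for $\alpha<1/4$, and realistic $\beta$ is tiny). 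The paper's route avoids this entirely: in its Lemma~\ref{lemma_legendre} it ties $K=p^{2\tau/(2-\beta)}$ to $\tau$, uses the trivial bound $\big|\sum\chi(i-j)\big|\leq|I||J|$ when $|I||J|\leq 3p^{2\tau}$ and the discrepancy bound otherwise, and thereby obtains the correctly normalized estimate $p^{\tau}\sqrt{3|I||J|}$ for \emph{all} admissible $I,J$. Once the bound is in that form, the off-the-shelf conversion $\delta=150\theta\log K$ (Proposition~\ref{prop_flatRIP}) finishes the proof with no dyadic analysis needed, at the cost of a $\delta$ scaling like $K^{1-\beta/2}\log K/\sqrt{p}$ rather than the $K^{1-\beta}/\sqrt{p}$ you claim; your sharper exponent is unsubstantiated. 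You would also need the paper's Case~II device (replacing $\beta$ by $\beta^*=2-1/\gamma^*$) to handle $2\alpha/(2-\beta)\geq 1/2$, and a short argument for index sets containing the appended identity column, which you mention but do not carry out.
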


In particular, Conjecture~\ref{conj_chung} implies that $\Phi$ satisfies the $(K,\delta)$-RIP with $K=\Omega(M^\gamma)$ for some $\gamma>1/2$.
As we will prove, it suffices to take any $\gamma$ such that
\begin{equation}\label{eq_gamma}
\frac{1}{2}
<\gamma
<\min\bigg\{\frac{1}{2-\beta},\frac{1}{4\alpha}\bigg\}.
\end{equation}
The optimal value of $\gamma$ that this approach provides is given by maximizing \eqref{eq_gamma} over $\alpha$ and with $\beta=\beta(\alpha)$ given by Conjecture~\ref{conj_chung}.
Since the optimal $\gamma$ cannot be larger than $1$, then as a byproduct of this theorem, we conclude that $\beta(\alpha)\leq1$ in Conjecture~\ref{conj_chung} whenever $\alpha<1/4$.

We postpone the proof of the implication (a)$\Rightarrow$(b) to the next section; for now, we prove only the second implication.

\begin{proof}[Proof of (b)$\Rightarrow$(c) in Theorem~\ref{thm.main result}]
Let $\omega$ be the number of vertices in the largest clique of the Paley graph.
We will show that $\omega\leq\delta\sqrt{p}$, which will prove the claim since $\delta=p^{\tau-1/2+\epsilon}$.
Let $\mathcal{K}\subset\F_p$ be a clique of maximal size, and let $a\in\F_p$ be a non-square (so that $\chi(a)=-1$).
Then $\mathcal{I}:=a\mathcal{K}$ is an independent set with cardinality $\omega$, i.e., $\chi(i-j)=-1$ for every $i,j\in \mathcal{I}$ with $i\neq j$.

Let $\varphi_0,\ldots,\varphi_p$ be the columns of $\Phi$, and let $\Phi_\mathcal{I}$ denote the submatrix of $\Phi$ containing the columns $\{\varphi_i:i\in\mathcal{I}\}$.
For any $i,j\in\F_p$ with $i\neq j$, we have
\begin{equation}\label{eq_innerproduct}
\langle \varphi_i,\varphi_j\rangle
= \frac1p+\frac2p\sum_{x=1}^{p-1}1_Q(x)\exp\big(-x(i-j)\big)
=\frac1p\sum_{y=0}^{p-1}\exp\big(-y^2(i-j)\big),
\end{equation}
where $Q$ is the set of squares in $\F_p$, and the second equality follows from the fact that each nonzero $x\in Q$ has exactly two representations as $x=y^2$.
The last sum in \eqref{eq_innerproduct} is a well-known quadratic Gauss sum, which equals $p^{1/2}\chi(i-j)$ for $p\equiv1\bmod4$.
Thus, we get
\begin{equation}\label{eq_innerproduct2}
  \langle \varphi_i,\varphi_j\rangle
  =\frac1{\sqrt{p}}\chi(i-j)
\end{equation}
In particular, if $i,j\in\mathcal{I}$ and $i\neq j$ then $\langle\varphi_i,\varphi_j\rangle=-p^{-1/2}$.
This implies that
\[
\Phi_\mathcal{I}^*\Phi_\mathcal{I}
=(1+p^{-1/2})I_\omega-p^{-1/2}J_\omega,
\]
where $I_\omega$ denotes the $\omega\times\omega$ identity matrix and $J_\omega$ denotes the matrix of all $1$s.
The smallest eigenvalue of this matrix is $1-\omega p^{-1/2}$.

By assumption, $\Phi$ satisfies the $(p^\gamma,\delta)$-RIP.
Since $|\mathcal{I}|=|\mathcal{K}|=\omega\leq p^{1/2}\leq p^\gamma$, we therefore have that the eigenvalues of $\Phi_\mathcal{I}^*\Phi_\mathcal{I}$ lie between $1-\delta$ and $1+\delta$.
In particular, $1-\omega p^{-1/2}\geq1-\delta$, and so rearranging gives $\omega\leq\delta\sqrt{p}$, as desired.
\end{proof}

\section{The Paley matrix satisfies the RIP}

In this section, we give a proof of the implication (a)$\Rightarrow$(b) in Theorem \ref{thm.main result}.
In order to show that $\Phi$ satisfies the RIP, we will employ a trick developed in~\cite{Bourgain_Dilworth_Ford_Konyagin_Kutzarova11} and show that $\Phi$ satisfies the so-called flat RIP property:

\begin{definition}
An $M\times N$ matrix $\Phi$ with columns $\varphi_1,\dots,\varphi_N$ satisfies the \textit{$(K,\theta)$-flat RIP} if for every disjoint sets $I,J\subset[N]$ such that $|J|\leq|I|\leq K$, we have
$$\bigg|\bigg\langle\sum_{i\in I}\varphi_i,\sum_{j\in J}\varphi_j\bigg\rangle\bigg|\leq\theta\sqrt{|I||J|}$$
\end{definition}

The relation between flat RIP and RIP is given by the following proposition:

\begin{proposition}\label{prop_flatRIP}
If $\Phi$ satisfies the $(K,\theta)$-flat RIP and its columns have unit norm, then it satisfies the $(K,\delta)$-RIP with $\delta=150\theta\log K$.
\end{proposition}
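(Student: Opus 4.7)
The plan is to show that for every $T \subseteq [N]$ with $|T| \leq K$, the operator $\Phi_T^*\Phi_T - I_{|T|}$ has norm at most $\delta$. Since $\Phi$ has unit-norm columns, this matrix has zero diagonal, so the task is to bound the Hermitian form
\[
Q(x,y)=\sum_{\substack{i,j\in T\\ i\neq j}} x_i\,\overline{y_j}\,\langle\varphi_i,\varphi_j\rangle
\]
by $\delta\|x\|_2\|y\|_2$. A standard decoupling reduction (uniformly random partition $T=T_1\sqcup T_2$; each off-diagonal pair lands in $T_1\times T_2$ with probability $1/2$) turns this into bounding $|\langle\Phi u,\Phi v\rangle|$ uniformly for vectors $u,v$ with disjoint supports contained in $T$ and unit $\ell_2$ norm.

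Next I would carry out the dyadic level-set decomposition originating in \cite{Bourgain_Dilworth_Ford_Konyagin_Kutzarova11}. Write $u=\sum_k u_k$, where $u_k$ is the restriction of $u$ to indices with $|u_i|\in(2^{-k-1}\|u\|_\infty,\,2^{-k}\|u\|_\infty]$, and similarly for $v$. Because $\|u\|_\infty\geq\|u\|_2/\sqrt K$, only $O(\log K)$ bands contribute non-negligibly; the remaining tail consists of entries of magnitude $\ll\|u\|_2/K$ and is $\ell_2$-small. On each band I would further split $u_k$ into $O(1)$ pieces of approximately constant phase (two signs in the real case, four quadrants in the complex case), so that each piece takes the form $c_k\psi_k\mathbf{1}_{I_k}$ plus a small error, with $|c_k|\asymp 2^{-k}\|u\|_\infty$ and hence $|c_k|\sqrt{|I_k|}\asymp\|u_k\|_2$. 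Because the supports of $u$ and $v$ are disjoint, any resulting $I_k$ and $J_l$ are disjoint, and so the $(K,\theta)$-flat RIP hypothesis applies directly: $|\langle\Phi\mathbf{1}_{I_k},\Phi\mathbf{1}_{J_l}\rangle|\leq\theta\sqrt{|I_k||J_l|}$.

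Combining these estimates yields $|\langle\Phi u_k,\Phi v_l\rangle|\lesssim \theta\,\|u_k\|_2\|v_l\|_2$ for each band pair. Summing in $k,l$ and using Cauchy--Schwarz in the form $\sum_k\|u_k\|_2\leq\sqrt{\#\{k\}}\,\|u\|_2=O(\sqrt{\log K})\,\|u\|_2$ gives the target bound $|\langle\Phi u,\Phi v\rangle|=O(\theta\log K)\,\|u\|_2\|v\|_2$, which is the shape of the proposition. The hardest part will be the numerology: each of the three reductions (Hermitian symmetrization, random decoupling, phase splitting) introduces a small multiplicative loss, and one must verify that the dyadic tail from entries of magnitude $\ll\|u\|_2/K$ — which cannot be usefully bounded through flat RIP — is separately controlled using only the trivial estimate $\|\Phi\|_{\mathrm{op}}\cdot\|\mathrm{tail}\|_2$. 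These loose constants must be tracked carefully to land at the stated value $150$; the qualitative estimate $\delta=O(\theta\log K)$, on the other hand, follows cleanly from the three steps above.
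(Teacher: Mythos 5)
The paper does not actually prove this proposition; it imports it by combining Lemma~11 and Theorem~13 of \cite{Bandeira_Fickus_Mixon_Wong13}, which in turn distill the argument of \cite{Bourgain_Dilworth_Ford_Konyagin_Kutzarova11}. So the comparison is with that cited argument, and your outline has the same architecture: reduce to a bilinear form over disjointly supported vectors, split into $O(\log K)$ dyadic magnitude bands, apply flat RIP to the resulting disjoint index sets, and pay $\log K$ via Cauchy--Schwarz across bands. The overall route is the right one, and the qualitative conclusion $\delta=O(\theta\log K)$ does come out of it.

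Two steps, as written, would not survive being made precise. First, within a dyadic band the moduli still vary by a factor of $2$ and (in the complex case) the phases over a whole quadrant, so approximating $u_k$ by $c_k\psi_k\mathbf{1}_{I_k}$ leaves an error \emph{of the same order as $u_k$ itself}, not a small one; iterating the approximation changes the bookkeeping and threatens the constant. The cited proofs avoid this by representing the coefficients exactly, e.g.\ via the layer-cake identity $a_i=\int_0^1\mathbf{1}_{\{a_i>t\}}\,dt$, which turns $\sum_i a_i\varphi_i$ into an average of indicator sums to which flat RIP applies with no approximation error and only a factor-of-$2$ loss per band. (One must apply this \emph{within} each band rather than to the whole vector at once, or the loss becomes $(\log K)^2$ instead of $\log K$.) Second, the hypotheses give no control on $\|\Phi\|_{\mathrm{op}}$ --- only unit-norm columns and flat RIP --- so the tail cannot be dispatched by ``$\|\Phi\|_{\mathrm{op}}\cdot\|\mathrm{tail}\|_2$''. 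You can salvage something close to this by noting that flat RIP with singletons gives coherence $|\langle\varphi_i,\varphi_j\rangle|\le\theta$, hence $\|\Phi_T\|_{\mathrm{op}}^2\le 1+K\theta$ by Gershgorin, provided you cut the tail low enough (magnitude $\lesssim\|u\|_2/K^{3/2}$, which still costs only $O(\log K)$ bands); the cleaner fix is again to absorb the tail into the level-set computation so that every term scales with $\theta$. With these repairs the argument closes up to an unspecified constant; landing exactly on $150$ requires the explicit bookkeeping of \cite{Bandeira_Fickus_Mixon_Wong13}, which you correctly identify as the remaining work.
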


This proposition is essentially contained in~\cite{Bourgain_Dilworth_Ford_Konyagin_Kutzarova11}, and follows from combining Lemma~11 with Theorem~13 from \cite{Bandeira_Fickus_Mixon_Wong13}.
We will need a lemma to turn the sum of the type \eqref{eq_conj_chung} into a sum over two disjoint sets, which flat RIP uses.

\begin{lemma}\label{lemma_legendre}
Let $p\equiv1\bmod 4$ be a prime and let $\alpha>0$ and $0<\beta<2$ be such that \eqref{eq_conj_chung} holds.
Then for any $\tau\geq2\alpha/(2-\beta)$, we have
\begin{equation}\label{eq_lemma}\bigg|\sum_{i\in I,j\in J}\chi(i-j)\bigg|\leq p^\tau\sqrt{3|I||J|}\end{equation}
for any disjoint sets $I,J\subset\F_p$ with $|J|\leq|I|\leq p^{2\tau/(2-\beta)}$.
\end{lemma}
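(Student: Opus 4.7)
The plan is to combine a polarization identity with the trivial termwise bound and then interpolate the two estimates using the inequality $\min(X,Y)^2\leq XY$. Set $T:=\sum_{i\in I,\,j\in J}\chi(i-j)$ and $\Sigma(S):=\sum_{a,b\in S}\chi(a-b)$. Since $p\equiv1\bmod 4$ we have $\chi(-1)=1$, so $\chi(a-b)=\chi(b-a)$; splitting $\Sigma(I\cup J)$ by which of $I,J$ each index lies in (using $I\cap J=\emptyset$), this symmetry yields
\[
\Sigma(I\cup J)=\Sigma(I)+\Sigma(J)+2T,
\]
and hence $|T|\leq\tfrac12\bigl(|\Sigma(I\cup J)|+|\Sigma(I)|+|\Sigma(J)|\bigr)$.

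Next I would bound each quadratic sum uniformly. For any $S\subseteq\F_p$: if $|S|>p^\alpha$, hypothesis \eqref{eq_conj_chung} gives $|\Sigma(S)|<|S|^{2-\beta}$; otherwise the trivial estimate gives $|\Sigma(S)|\leq|S|^2\leq p^{2\alpha}$. With $|I|,|J|\leq p^{2\tau/(2-\beta)}$ we have $|I|^{2-\beta},|J|^{2-\beta}\leq p^{2\tau}$, and using $\tau\geq 2\alpha/(2-\beta)>\alpha$ (since $\beta>0$) we also have $p^{2\alpha}\leq p^{2\tau}$, so in either branch $|\Sigma(I)|,|\Sigma(J)|\leq p^{2\tau}$. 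For the $I\cup J$ term, $|I\cup J|\leq 2|I|$ introduces at most a factor $2^{2-\beta}\leq 4$, so $|\Sigma(I\cup J)|\leq 4p^{2\tau}$ in either branch. Substituting yields $|T|\leq\tfrac12(4+1+1)p^{2\tau}=3p^{2\tau}$.

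Finally, combining this with the trivial bound $|T|\leq|I|\,|J|$ via $\min(X,Y)^2\leq XY$ gives $|T|^2\leq(3p^{2\tau})(|I|\,|J|)$, which is the desired $|T|\leq p^\tau\sqrt{3|I|\,|J|}$.

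The main obstacle is that the polarization estimate $3p^{2\tau}$ carries no dependence on $|J|$, whereas the target shrinks like $\sqrt{|J|}$; polarization alone is therefore far too weak when $|J|\ll|I|$. The $\min$-interpolation with the trivial bound is precisely what introduces the missing $\sqrt{|J|}$ scaling and allows the two complementary estimates to conspire into the required bilinear form.
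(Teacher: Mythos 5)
Your proof is correct and follows essentially the same route as the paper: the polarization identity via $\chi(-1)=1$, three applications of \eqref{eq_conj_chung} (with the trivial fallback for sets of size at most $p^\alpha$), and interpolation against the termwise bound $|T|\leq|I||J|$. The only difference is cosmetic — you package the final interpolation as $\min(X,Y)^2\leq XY$, whereas the paper splits explicitly on whether $|I||J|\leq 3p^{2\tau}$ — and both yield the stated bound.
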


\begin{proof}
The idea is to apply \eqref{eq_conj_chung} three times: with $S=I$, $S=J$ and $S=I\cup J$.
First observe that if $|I||J|\leq3p^{2\tau}$, then we can apply the fact that $|\chi(x)|\leq1$ for all $x\in\F_p$ together with the triangle inequality to get the trivial bound:
$$\bigg|\sum_{i\in I,j\in J}\chi(i-j)\bigg|\leq|I||J|\leq\sqrt{3p^{2\tau}}\sqrt{|I||J|}=p^\tau\sqrt{3|I||J|}.$$
Thus, we will assume that
\begin{equation}\label{eq_lemma_trivialbound}|I||J|>3p^{2\tau}\end{equation}
In particular $|I|>p^\tau$.
Next, we consider the identity
$$\sum_{a,b\in I\cup J}\chi(a-b)=\sum_{a,b\in I}\chi(a-b)+\sum_{a,b\in J}\chi(a-b)+\sum_{i\in I,j\in J}\Big(\chi(i-j)+\chi(j-i)\Big).$$
Since $p\equiv1\bmod4$, for every $x\in\F_p$ we have $\chi(-x)=\chi(x)$. Thus, it follows from the triangle inequality that
$$2\bigg|\sum_{i\in I,j\in J}\chi(i-j)\bigg|\leq\bigg|\sum_{a,b\in I\cup J}\chi(a-b)\bigg|+\bigg|\sum_{a,b\in I}\chi(a-b)\bigg|+\bigg|\sum_{a,b\in J}\chi(a-b)\bigg|.$$
Applying \eqref{eq_conj_chung} with $S=I$ and again with $S=I\cup J$ (observing that $\tau>\alpha$, and so $|I\cup J|\geq|I|>p^\tau>p^\alpha$) we get
\begin{equation}\label{eq_lemma_2}2\bigg|\sum_{i\in I,j\in J}\chi(i-j)\bigg|\leq|I\cup J|^{2-\beta}+|I|^{2-\beta}+\bigg|\sum_{a,b\in J}\chi(a-b)\bigg|.\end{equation}
We now deal with the sum over $J$. If $|J|\leq p^\alpha$ we use the triangle inequality and, recalling the bound imposed on $\tau$, obtain
$$\bigg|\sum_{a,b\in J}\chi(a-b)\bigg|\leq|J|^2\leq p^{2\alpha}\leq p^{\tau(2-\beta)}\leq|I|^{2-\beta}.$$
If $|J|> p^\alpha$, then we can apply \eqref{eq_conj_chung} with $S=J$, and again we get
$$\bigg|\sum_{a,b\in J}\chi(a-b)\bigg|\leq|J|^{2-\beta}\leq|I|^{2-\beta}.$$
Plugging this back into \eqref{eq_lemma_2} and using the fact that $|I\cup J|\leq2|I|$, we get
$$\bigg|\sum_{i\in I,j\in J}\chi(i-j)\bigg|\leq\frac12|I\cup J|^{2-\beta}+|I|^{2-\beta}\leq\bigg(\frac{2^{2-\beta}}2+1\bigg)|I|^{2-\beta}<3|I|^{2-\beta}.$$
Finally, from the bound imposed on $|I|$, it follows that
$$\bigg|\sum_{i\in I,j\in J}\chi(i-j)\bigg|<3|I|^{2-\beta}\leq3p^{2\tau}=p^\tau\sqrt{9p^{2\tau}}\leq p^\tau\sqrt{3|I||J|},$$
where the last inequality follows from \eqref{eq_lemma_trivialbound}.
\end{proof}

We are now ready to prove the implication (a)$\Rightarrow$(b) of Theorem \ref{thm.main result}:

\begin{proof}[Proof of (a)$\Rightarrow$(b) in Theorem~\ref{thm.main result}]
Given $\operatorname{PaleyDiscrepancy}[\alpha,\beta]$, take $p$ large enough so that \eqref{eq_conj_chung} holds.
We will assume without loss of generality that $\beta\leq 1$, since otherwise, we may take $\beta=1$ and \eqref{eq_conj_chung} still holds.
Fix $\gamma$ satisfying \eqref{eq_gamma}.
We proceed by considering two cases:

\medskip

\noindent\textbf{Case I.} $\tfrac{2\alpha}{2-\beta}<\tfrac12$.

In this case, we pick
$$\tau=\max\bigg\{\frac{2\alpha}{2-\beta},\frac{2-\beta}2\gamma\bigg\}$$
and $K=p^{2\tau/(2-\beta)}$.
It is easily verified from \eqref{eq_gamma} that $\tau<1/2$ and $K\geq p^\gamma$.

To show that the Paley matrix $\Phi$ satisfies the RIP, we will employ Proposition \ref{prop_flatRIP} and show instead that it satisfies the flat RIP.
We index the columns of $\Phi$ by $\{0,1,\dots,p\}$.
Let $I,J$ be disjoint subsets of $\{0,1,\dots,p\}$ with $|J|\leq|I|\leq K$.
Suppose first that $p\notin I\cup J$, so that actually $I,J\subset\F_p$.
Appealing to \eqref{eq_innerproduct2}, we have
\begin{equation}\label{eq_estimate}\bigg|\bigg\langle\sum_{i\in I}\varphi_i,\sum_{j\in J}\varphi_j\bigg\rangle\bigg|=\bigg|\sum_{i\in I,j\in J}\langle\varphi_i,\varphi_j\rangle\bigg|=\frac1{\sqrt{p}}\bigg|\sum_{i\in I,j\in J}\chi(i-j)\bigg|\leq p^{\tau-1/2}\sqrt{3|I||J|},\end{equation}
where the last step is by Lemma~\ref{lemma_legendre}.
Next, consider the case where $p\in J$ (the case $p\in I$ is analogous). Recalling that the last column of $\Phi$ is $\varphi_p=[1,0,\dots,0]^\top$, we have
\[
\bigg|\bigg\langle\sum_{i\in I}\varphi_i,\sum_{j\in J}\varphi_j\bigg\rangle\bigg|
\leq\bigg|\bigg\langle\sum_{i\in I}\varphi_i,\sum_{j\in J\setminus\{p\}}\varphi_j\bigg\rangle\bigg|+\bigg|\bigg\langle\sum_{i\in I}\varphi_i,\varphi_p\bigg\rangle\bigg|
=\bigg|\bigg\langle\sum_{i\in I}\varphi_i,\sum_{j\in J\setminus\{p\}}\varphi_j\bigg\rangle\bigg|+\frac{|I|}{\sqrt{p}}.
\]
Observe that since $\beta\leq1$, $|I|\leq\sqrt{|I|}p^{\tau/(2-\beta)}\leq\sqrt{|I|}p^\tau\leq p^\tau\sqrt{3|I||J|}$. Putting this together with \eqref{eq_estimate}, we get
$$\bigg|\bigg\langle\sum_{i\in I}\varphi_i,\sum_{j\in J}\varphi_j\bigg\rangle\bigg|\leq p^{\tau-1/2}\sqrt{3|I||J|}+\frac{|I|}{\sqrt{p}}\leq2p^{\tau-1/2}\sqrt{3|I||J|}.$$

It follows that $\Phi$ satisfies the $(K,\theta)$-flat RIP with $\theta=2\sqrt{3}p^{\tau-1/2}$ and $K=p^{2\tau/(2-\beta)}\geq p^\gamma$.
By Proposition~\ref{prop_flatRIP}, we conclude that $\Phi$ satisfies the $(K,\delta)$-RIP with
$$\delta=150\theta\log K=300\sqrt{3}p^{\tau-1/2}\cdot\frac{2\tau}{2-\beta}\log p\leq p^{\tau-1/2+\epsilon},$$
where the last inequality holds for sufficiently large $p$.

\medskip

\noindent\textbf{Case II.} $\tfrac{2\alpha}{2-\beta}\geq\tfrac12$.

For this case, we will make $\beta$ smaller so that the inequality reverts to the previous case.
More precisely, pick $\gamma^*$ such that
\[
\gamma
<\gamma^*
<\min\bigg\{\frac{1}{2-\beta},\frac{1}{4\alpha}\bigg\},
\]
and pick $\beta^*=2-1/\gamma^*$.
Since $\gamma^*<1/(2-\beta)$, we get $\beta^*<\beta$ and hence \eqref{eq_conj_chung} still holds for $\beta^*$.
Since $\gamma<\gamma^*$, the inequality \eqref{eq_gamma} still holds when $\beta$ is replaced by $\beta^*$.
Since $\gamma^*<1/(4\alpha)$, we have $\tfrac{2\alpha}{2-\beta^*}<\tfrac12$.
Therefore, one may use the same proof as the previous case, but using $\beta^*$ in the place of $\beta$ (and using the same $\gamma$).
\end{proof}

\end{document}